\def\supp{\mbox{supp}}
\def\p{\partial}
\def\R{\mathbb{R}}
\def\vv<#1>{\langle#1\rangle}
\def\1{\mathbf{1}}
\def\XXint#1#2{\setbox0=\hbox{$#1{#2}{\int}$}{#2}\kern-.5\wd0 }
\def\XXint#1#2#3{{\setbox0=\hbox{$#1{#2#3}{\int}$}
     \vcenter{\hbox{$#2#3$}}\kern-.5\wd0}}
\def\vv<#1>{{\left\langle#1\right\rangle}}
\def\Z{\mathbb{Z}}
\newtheorem{thm}{Theorem}[section]
\newtheorem{cor}{Corollary}[section]
\theoremstyle{definition}
\theoremstyle{remark}
\numberwithin{equation}{section}
\begin{document}
\title{Sharp Gagliardo-Nirenberg inequality and logarithmic Sobolev inequality on integer lattices}

\author{Yongjie Shi$^1$}
\address{Department of Mathematics, Shantou University, Shantou, Guangdong, 515063, China}
\email{yjshi@stu.edu.cn}
\author{Chengjie Yu$^2$}
\address{Department of Mathematics, Shantou University, Shantou, Guangdong, 515063, China}
\email{cjyu@stu.edu.cn}
\thanks{$^1$Research partially supported by GDNSF with contract no. 2025A1515011144 and NNSF of China with contract no. 11701355. }
\thanks{$^2$ Research partially supported by GDNSF with contract no. 2025A1515011144.}
\renewcommand{\subjclassname}{%
  \textup{2020} Mathematics Subject Classification}
\subjclass[2020]{Primary 39A12; Secondary 05C50}
\date{}
\keywords{Garliardo-Nirenberg inequality, logarithmic Sobolev inequality, integer lattice}
\begin{abstract}
In this paper, we obtain a sharp Garliardo-Nirenberg inequality on integer lattices and characterize its rigidity. Moreover, as a consequence of the sharp Garliardo-Nirenberg inequality, we obtain sharp logarithmic Sobolev inequalities on integer lattices. 
\end{abstract}
\maketitle
\markboth{Shi \& Yu}{Sharp Gagliardo-Nirenberg inequality and logarithmic Sobolev inequality}
\section{Introduction}
In discrete geometric analysis, geometric inequalities such as the Sobolev inequality and the isoperimetric inequality play important roles. For examples, in the recent works \cite{HL,HLM}, the authors considered the existence of extremal functions for Sobolev inequalities on $\Z^n$. In \cite{SYZ}, the author considered  Sobolev inequalities on general weighted graphs. In \cite{Jo,LLS,Mu}, the authors  considered logarithmic Sobolev inequalities on graphs with some curvature assumptions. 
   
In this paper, we obtain the following discrete version of the Garliardo-Nirenberg inequality (see \cite{GT,Ga,Ni,Sa}) on $\Z^n$, and characterize its rigidity.
\begin{thm}\label{thm-G-N}
For any $f\in C_0(\Z^n)$ with $n\geq 2$,
\begin{equation}\label{eq-G-N}
\|f\|_{\frac{n}{n-1}}\leq\frac12\prod_{i=1}^n\|\p_if\|_1^\frac1n.
\end{equation}
Moreover, the equality holds if and only if $f=\lambda\cdot\chi_{A}$ where $A$ is a cuboid in $\Z^n$.
\end{thm}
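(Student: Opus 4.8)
The plan is to transplant the classical proof of the Gagliardo--Nirenberg--Sobolev inequality to the lattice, replacing the fundamental theorem of calculus by a telescoping sum and Fubini's theorem by the discrete Loomis--Whitney inequality. First I would fix the deleted multi-index $\hat x_i=(x_1,\dots,x_{i-1},x_{i+1},\dots,x_n)$ and use that $f$ has finite support to telescope in the $i$-th direction, writing $f(x)=\sum_{t<x_i}\p_i f=-\sum_{t\ge x_i}\p_i f$. Adding the two one-sided triangle-inequality bounds gives the pointwise estimate
\[
2|f(x)|\le g_i(\hat x_i):=\sum_{t\in\Z}\big|\p_i f(x_1,\dots,t,\dots,x_n)\big|,
\]
and the constant $\tfrac12$ in \eqref{eq-G-N} is exactly the gain from combining the two opposite one-sided sums. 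Multiplying these $n$ estimates and raising to the power $\tfrac1{n-1}$ yields $(2|f(x)|)^{\frac n{n-1}}\le\prod_{i=1}^n g_i(\hat x_i)^{\frac1{n-1}}$, where crucially each $g_i$ does not depend on $x_i$. Summing over $x\in\Z^n$ and applying the discrete Loomis--Whitney inequality $\sum_x\prod_i g_i(\hat x_i)^{\frac1{n-1}}\le\prod_i(\sum_{\hat x_i}g_i)^{\frac1{n-1}}$, together with $\sum_{\hat x_i}g_i=\|\p_i f\|_1$, gives $2^{\frac n{n-1}}\|f\|_{\frac n{n-1}}^{\frac n{n-1}}\le\prod_i\|\p_i f\|_1^{\frac1{n-1}}$, and raising to the power $\tfrac{n-1}{n}$ is \eqref{eq-G-N}.

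For the rigidity I would first reduce to $f\ge 0$. Replacing $f$ by $|f|$ leaves $\|f\|_{\frac n{n-1}}$ unchanged and, by the reverse triangle inequality, cannot increase any $\|\p_i f\|_1$; hence an extremizer must have $f(x+e_i)$ and $f(x)$ of the same sign across every edge, so $f$ has constant sign on each edge-component of its support and it suffices to analyze $f\ge 0$, restoring signs at the end. I then trace the two inequalities. Equality in the pointwise step forces, on $S:=\supp f$, that all the $g_i(\hat x_i)=2f(x)$ coincide, while the telescoping/triangle equality (a nonnegative finitely supported sequence has total variation $\ge 2\max$, with equality only for a single constant block) forces $f$ to be \emph{constant along each coordinate line inside $S$} with that line's trace in $S$ a single interval. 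Off $S$ it forces $\prod_i g_i(\hat x_i)=0$; since $g_i(\hat x_i)=0$ iff the whole line is omitted from $S$, this says $S=\bigcap_i\pi_i^{-1}(\pi_i S)$, i.e. $S$ is determined by its coordinate projections $\pi_i$.

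Finally I would combine these with the equality case of the discrete Loomis--Whitney inequality to force the product structure that pins down a cuboid: line-wise constancy should upgrade $f$ to a single constant $\lambda$ on $S$, while the projection-closed condition together with interval slices should force $S$ to be a product of intervals. The case $n=2$ is already transparent, since there Loomis--Whitney is an identity and the projection-closed condition directly gives $S=\pi_2S\times\pi_1S$, whose interval slices make both factors intervals; thus $f=\lambda\cdot\chi_A$ with $A$ a cuboid.

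The main obstacle is precisely this rigidity. The discrete Loomis--Whitney inequality has a genuinely more delicate equality case than its continuous analogue, and the real work is to show that the combination of the three recovered conditions---product structure, constancy along every coordinate line, and $S=\bigcap_i\pi_i^{-1}(\pi_i S)$ with interval slices---leaves no room for staircase-type or otherwise non-rectangular supports. I expect the cleanest route is an induction on $n$, reducing the equality analysis in dimension $n$ to fibered copies of dimension $n-1$, with the $n=2$ case above as the base.
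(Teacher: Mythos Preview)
Your proof of the inequality is correct and is the classical Gagliardo--Nirenberg argument; the paper does the same thing, only it packages the multilinear step as a separate ``max-based'' Brascamp--Lieb inequality (Theorem~\ref{thm-BL}): $\|f\|_{n/(n-1)}\le\prod_i\||f|_i\|_1^{1/n}$ with $|f|_i=\max_{z_i}|f|$, and then bounds $|f|_i\le\tfrac12 g_i$ by your telescoping identity. The underlying H\"older induction is identical to the one proving the functional Loomis--Whitney inequality you invoke.

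The rigidity, however, has a genuine gap, which you yourself flag. The conditions you extract from the \emph{pointwise} step alone---linewise constancy of $f$ on $S=\supp f$ with interval slices, together with the ``projection-closed'' condition that for each $x\notin S$ some coordinate line through $x$ misses $S$---do \emph{not} force $S$ to be a product once $n\ge 3$. For instance, $S=\{(0,0,0),(1,1,1)\}\subset\Z^3$ with $f=\chi_S$ satisfies all three conditions (every coordinate line meets $S$ in at most one point, and for any $x\notin S$ two coordinates of $x$ differ, so the line in the remaining direction misses $S$), yet $S$ is not a product and $f$ is not an extremizer. Your notation is also slightly misleading: with the ``omit-$i$'' projections $\hat\pi_i$, the identity $S=\bigcap_i\hat\pi_i^{-1}(\hat\pi_i S)$ is strictly weaker than $S=\prod_i\pi_i(S)$ for $n\ge 3$. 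So the missing information must come from equality in your functional Loomis--Whitney step, whose discrete equality case you never actually analyze; your proposed induction would have to carry exactly this.

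The paper avoids this difficulty. By proving Theorem~\ref{thm-BL} \emph{together with its rigidity} (via the same H\"older induction, tracking equality at each step), it obtains directly that any extremizer of \eqref{eq-G-N} has $|f|=\lambda\chi_A$ with $A=\prod_{i=1}^n\pi_i(A)$ a product of \emph{arbitrary} finite subsets of $\Z$. The remaining work is then purely one-dimensional: the telescoping equality $2|f(z)|=\int_\Z|\p_i f|\,dz_i$ forces $f$ along each coordinate line to be a constant times the indicator of a single interval, so each factor $\pi_i(A)$ is an interval, $A$ is a cuboid, and the sign of $f$ is constant. If you wish to complete your route, the cleanest fix is to prove the equality case of the functional Loomis--Whitney inequality you use---but that is essentially reproving the rigidity of Theorem~\ref{thm-BL} in different clothing.
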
 
By applying the AM-GM inequality to the RHS of the sharp Garliardo-Nirenberg inequality \eqref{eq-G-N}, one obtains the following sharp Sobolev inequality.
\begin{cor}\label{cor-Sobolev}
For any $f\in C_0(\Z^n)$ with $n\geq 2$,
\begin{equation}\label{eq-Sobolev}
\|f\|_{\frac{n}{n-1}}\leq \frac{1}{2n}\|df\|_1.
\end{equation}
Moreover, the equality holds if and only if $f=\lambda\cdot\chi_Q$ where $Q$ is a cube in $\Z^n$. 
\end{cor}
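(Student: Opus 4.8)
The plan is to obtain \eqref{eq-Sobolev} from the sharp Gagliardo--Nirenberg inequality \eqref{eq-G-N} by a single application of the arithmetic--geometric mean inequality, and then to recover the rigidity by tracking the two equality cases that enter. Since $\|df\|_1=\sum_{i=1}^n\|\partial_if\|_1$, applying AM--GM to the nonnegative numbers $\|\partial_1f\|_1,\dots,\|\partial_nf\|_1$ gives
\[
\prod_{i=1}^n\|\partial_if\|_1^{\frac1n}\le\frac1n\sum_{i=1}^n\|\partial_if\|_1=\frac1n\|df\|_1,
\]
and substituting this into \eqref{eq-G-N} immediately yields $\|f\|_{\frac{n}{n-1}}\le\frac1{2n}\|df\|_1$, which is \eqref{eq-Sobolev}.

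For the rigidity I would observe that equality in \eqref{eq-Sobolev} forces equality simultaneously in \eqref{eq-G-N} and in the AM--GM step. By Theorem \ref{thm-G-N}, equality in \eqref{eq-G-N} already gives $f=\lambda\cdot\chi_A$ for a cuboid $A$, so the task reduces to showing that the AM--GM equality upgrades ``cuboid'' to ``cube''. Equality in AM--GM holds exactly when all the $\|\partial_if\|_1$ coincide, so the key computation is to evaluate these norms for $f=\lambda\cdot\chi_A$. Writing $A$ as a product of lattice intervals containing $\ell_1,\dots,\ell_n$ points respectively, the function $\partial_i(\lambda\chi_A)$ takes the value $\pm\lambda$ on exactly the two transverse boundary layers of $A$ and vanishes elsewhere, so that
\[
\|\partial_if\|_1=2|\lambda|\prod_{j\neq i}\ell_j=\frac{2|\lambda|\prod_{j=1}^n\ell_j}{\ell_i}.
\]
Thus $\|\partial_if\|_1=\|\partial_kf\|_1$ for all $i,k$ is equivalent to $\ell_i=\ell_k$ for all $i,k$, i.e.\ to $A$ being a cube $Q$. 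Conversely, for $f=\lambda\chi_Q$ all $n$ of the norms $\|\partial_if\|_1$ are equal and $Q$ is a cuboid, so both \eqref{eq-G-N} and AM--GM are equalities and hence so is \eqref{eq-Sobolev}.

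The genuinely substantive ingredient is Theorem \ref{thm-G-N}; granting it, the whole corollary is essentially a two-line estimate together with an equality-case analysis. The only point requiring care is the displayed evaluation of $\|\partial_if\|_1$ for the indicator of a cuboid: one must check that each transverse column of $A$ contributes exactly $2|\lambda|$ to $\|\partial_if\|_1$ (one jump up entering $A$ and one jump down leaving it) and that the number of such columns is the cross-sectional count $\prod_{j\neq i}\ell_j$. This bookkeeping is elementary but is what pins down the precise rigidity statement.
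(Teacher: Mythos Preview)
Your proposal is correct and follows exactly the paper's approach: apply AM--GM to $\|\partial_1 f\|_1,\dots,\|\partial_n f\|_1$ on top of \eqref{eq-G-N}, and for rigidity combine the cuboid conclusion of Theorem~\ref{thm-G-N} with the AM--GM equality condition $\|\partial_1 f\|_1=\cdots=\|\partial_n f\|_1$. Your explicit computation of $\|\partial_i(\lambda\chi_A)\|_1=2|\lambda|\prod_{j\neq i}\ell_j$ spells out the step the paper leaves as ``This implies that $A$ is a cube,'' so your write-up is in fact more detailed than the original.
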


In \eqref{eq-Sobolev}, letting $f=\chi_A$ for any finite set $A\subset \Z^n$, one obtains the sharp isoperimentric inequality in $\Z^n$:
\begin{equation}\label{eq-isoperimetric}
|A|^{n-1}\leq \frac{1}{2^nn^n}|\p A|^n
\end{equation}
with that the equality holds if and only if $A$ is a cube in $\Z^n$. The sharp isoperimetric inequality \eqref{eq-isoperimetric} can be also found in \cite{LP} which was proved by using the discrete part of the Loomis-Whitney inequality \cite{LW}. Moreover, it is well known that \eqref{eq-Sobolev} and \eqref{eq-isoperimetric} are equivalent (see \cite{Sa}). 

On the other hand, as another consequence the sharp Garliardo-Nirenberg inequality \eqref{eq-G-N}, we have the following sharp logarithmic Sobolev inequalities on $\Z^n$.
\begin{cor}\label{cor-log-Sobolev-1}
Let $n\geq 2$, $p>0$ and  $f\in C_0(\Z^n)$ be nonnegative  with $\|f\|_p=1$. Then
\begin{equation}\label{eq-log-Sobolev-1}
\left(\frac1n+\frac1p-1\right)\int_{\Z^n}f^p\log f^pdz_1dz_2\cdots dz_n\leq-\log 2+\frac1{n} \sum_{i=1}^n\log\|\p_if\|_1
\end{equation}
with that the equality holds if and only if $f=|A|^{-\frac1p}\cdot\chi_A$ for some cuboid $A$ in $\Z^n$.
\end{cor}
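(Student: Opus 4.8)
The plan is to derive \eqref{eq-log-Sobolev-1} by combining the sharp Gagliardo--Nirenberg inequality of Theorem \ref{thm-G-N} with a single application of Jensen's inequality, after noticing that the two produce exactly matching coefficients. The first observation is that taking logarithms in \eqref{eq-G-N} yields $\log\|f\|_{\frac{n}{n-1}}\le -\log 2+\frac1n\sum_{i=1}^n\log\|\p_if\|_1$, so the right-hand side of \eqref{eq-log-Sobolev-1} is precisely the logarithm of the right-hand side of \eqref{eq-G-N}. Hence it suffices to establish the \emph{graph-free} interpolation inequality
\begin{equation*}
\left(\frac1n+\frac1p-1\right)\int_{\Z^n}f^p\log f^p\,dz_1\cdots dz_n\le \log\|f\|_{\frac{n}{n-1}}\tag{$\star$}
\end{equation*}
for every nonnegative $f\in C_0(\Z^n)$ with $\|f\|_p=1$, since chaining $(\star)$ with the logarithm of \eqref{eq-G-N} produces \eqref{eq-log-Sobolev-1}.

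To prove $(\star)$, write $q=\frac{n}{n-1}$ and introduce the probability measure $d\mu=f^p$ supported on $\{f>0\}$, which is legitimate because $\int f^p=1$. Then $\int f^p\log f^p=p\int \log f\,d\mu$, while $\|f\|_q^q=\int f^q=\int f^{q-p}\,d\mu=\int e^{(q-p)\log f}\,d\mu$. Since $t\mapsto e^{(q-p)t}$ is convex for any real value of $q-p$, Jensen's inequality gives $\int e^{(q-p)\log f}\,d\mu\ge \exp\bigl((q-p)\int \log f\,d\mu\bigr)$, and therefore $\log\|f\|_q=\frac1q\log\int e^{(q-p)\log f}\,d\mu\ge \frac{q-p}{q}\int\log f\,d\mu$. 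The crucial algebraic identity is $p\bigl(\frac1n+\frac1p-1\bigr)=1-\frac pq=\frac{q-p}{q}$, which follows from $q=\frac{n}{n-1}$; it shows that the lower bound just obtained equals the left-hand side of $(\star)$, completing its proof.

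For the rigidity, equality in \eqref{eq-log-Sobolev-1} forces equality in both the Gagliardo--Nirenberg step and the Jensen step. By Theorem \ref{thm-G-N}, equality in \eqref{eq-G-N} gives $f=\lambda\cdot\chi_A$ with $A$ a cuboid, and the normalization $\|f\|_p=1$ then forces $\lambda=|A|^{-\frac1p}$. For such $f$ the function $\log f$ is constant on its support, so the Jensen step is automatically an equality (and in the degenerate case $p=\frac{n}{n-1}$ the exponent $q-p$ vanishes, making Jensen trivially tight); conversely, these $f$ realize equality throughout the chain.

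The main obstacle is not a hard estimate but the structural one: recognizing the reduction to $(\star)$ and verifying the coefficient-matching identity $p(\frac1n+\frac1p-1)=\frac{q-p}{q}$, which is exactly what makes the single Jensen bound tight against the target. The remaining care is in the rigidity analysis, where one must confirm that the equality cases of the two independent steps are simultaneously attained only by $f=|A|^{-\frac1p}\chi_A$, including the degenerate Jensen situation $p=\frac{n}{n-1}$.
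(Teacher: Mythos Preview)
Your proof is correct and follows exactly the paper's approach: the paper likewise combines Theorem \ref{thm-G-N} with the Jensen computation \eqref{eq-Jensen} (which is your inequality $(\star)$), and the rigidity analysis is the same. The only difference is presentational---you phrase Jensen via the convex function $t\mapsto e^{(q-p)t}$ against the probability measure $f^p$, while the paper applies it directly to $\log$ inside the integral---but the two are equivalent reformulations of the same step.
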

\begin{cor}\label{cor-log-Sobolev-2}
Let $n\geq 2$, $p>0$ and  $f\in C_0(\Z^n)$ be nonnegative  with $\|f\|_p=1$. Then
\begin{equation}\label{eq-log-Sobolev-2}
\left(\frac1n+\frac1p-1\right)\int_{\Z^n}f^p\log f^pdz_1dz_2\cdots dz_n\leq\log\left(\frac1{2n}\|df\|_1\right)
\end{equation}
with that the equality holds if and only if $f=|Q|^{-\frac1p}\chi_Q$ for some cube $Q$ in $\Z^n$.
\end{cor}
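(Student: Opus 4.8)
The plan is to obtain Corollary~\ref{cor-log-Sobolev-2} directly from the logarithmic Sobolev inequality \eqref{eq-log-Sobolev-1} of Corollary~\ref{cor-log-Sobolev-1} by a single application of the arithmetic--geometric mean inequality to the right-hand side. The only structural fact needed is the identity $\|df\|_1=\sum_{i=1}^n\|\p_if\|_1$, which expresses the full $\ell^1$ norm of the discrete gradient as the sum of its directional components.

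First I would rewrite the right-hand side of \eqref{eq-log-Sobolev-1} as $\log\!\big(\frac12\prod_{i=1}^n\|\p_if\|_1^{1/n}\big)$ and apply AM--GM in the form
\[
\prod_{i=1}^n\|\p_if\|_1^{1/n}\le\frac1n\sum_{i=1}^n\|\p_if\|_1=\frac1n\|df\|_1 .
\]
Since $\log$ is increasing, this yields $-\log2+\frac1n\sum_{i=1}^n\log\|\p_if\|_1\le\log\!\big(\frac1{2n}\|df\|_1\big)$, and chaining with \eqref{eq-log-Sobolev-1} gives \eqref{eq-log-Sobolev-2}. Alternatively, one can bypass \eqref{eq-log-Sobolev-1} entirely and argue directly from the sharp Sobolev inequality \eqref{eq-Sobolev}: the Jensen-type step $\big(\frac1n+\frac1p-1\big)\int f^p\log f^p\le\log\|f\|_{n/(n-1)}$, which uses that $f^p\,dz$ is a probability measure because $\|f\|_p=1$, combined with the logarithm of \eqref{eq-Sobolev}, produces \eqref{eq-log-Sobolev-2} in one stroke.

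The main work is the rigidity. Equality in \eqref{eq-log-Sobolev-2} forces equality in both steps above. Equality in \eqref{eq-log-Sobolev-1} already restricts $f$ to the form $|A|^{-1/p}\chi_A$ for a cuboid $A$ with side lengths $a_1,\dots,a_n$, so the remaining point is to see that equality in the AM--GM step forces $A$ to be a cube. Here I would compute the directional perimeters: for the indicator of such a cuboid one has $\|\p_if\|_1=2|A|^{-1/p}\prod_{j\ne i}a_j$, so that equality in AM--GM holds precisely when all the products $\prod_{j\ne i}a_j$ coincide, i.e. when $a_1=\cdots=a_n$. This identifies the extremizers as $f=|Q|^{-1/p}\chi_Q$ with $Q$ a cube, matching the statement. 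I expect this directional-perimeter computation, together with the bookkeeping that the two equality conditions are simultaneously attainable, to be the only genuinely nontrivial part; the analytic inequality itself is a one-line consequence of AM--GM.
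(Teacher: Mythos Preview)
Your proposal is correct and essentially matches the paper's approach: the paper obtains \eqref{eq-log-Sobolev-2} by combining the Jensen step \eqref{eq-Jensen} with Corollary~\ref{cor-Sobolev}, which is precisely your ``alternative'' route, while your primary route via Corollary~\ref{cor-log-Sobolev-1} plus AM--GM is just a reordering of the same three ingredients (Jensen, Gagliardo--Nirenberg, AM--GM). Note that the paper's ordering makes the rigidity analysis slightly cleaner---equality in \eqref{eq-Sobolev} already forces $f=\lambda\chi_Q$ with $Q$ a cube, so your directional-perimeter computation for cuboids, while correct, is unnecessary.
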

In the logarithmic Sobolev inequalities \eqref{eq-log-Sobolev-1} and \eqref{eq-log-Sobolev-2}, if we let $p=\frac{n-1}{n}$, then we recover the Garliardo-Nirenberg inequality \eqref{eq-G-N} and the sharp Sobolev inequality \eqref{eq-Sobolev} respectively. Moreover, if we let $p=1$ in \eqref{eq-log-Sobolev-2}, then \eqref{eq-log-Sobolev-2} is exactly the discrete version of the $L^1$ logarithmic Sobolev inequality on $\R^n$ (see \cite{Ge}).

Motivated by the work \cite{CGS}, we prove Theorem \ref{thm-G-N} by first proving the following discrete Brascamp-Lieb inequality and characterizing its rigidity.  
\begin{thm}\label{thm-BL}
For any nonnegative $f\in C_0(\Z^n)$ with $n\geq 2$, let 
$$f_i(z_1,z_2,\cdots,\widehat{z_i},\cdots,z_{n})=\max_{z_i\in \Z} f(z_1,z_2,\cdots,z_i,\cdots,z_n).$$
Here $\widehat{z_i}$ means that $z_i$ is not appeared in the expression. Then, 
\begin{equation}\label{eq-BL}
\|f\|_{\frac{n}{n-1}}\leq \left(\prod_{i=1}^n\|f_i\|_{1}\right)^{\frac{1}{n}}.
\end{equation}
Moreover, the equality of \eqref{eq-BL} holds if and only if $f=\lambda\cdot \chi_{A}$ with $A=\prod_{i=1}^n\pi_i(A)$ and $\lambda> 0$. 
\end{thm}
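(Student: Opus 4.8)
The plan is to reduce \eqref{eq-BL} to a combinatorial Loomis--Whitney inequality by a pointwise comparison, and then read off the rigidity by tracing equality back through that reduction. Since $f\geq 0$ and, for every $i$ and every $z$, $f_i(\pi_i(z))=\max_{t\in\Z}f(z_1,\dots,t,\dots,z_n)\geq f(z)$ (here $\pi_i\colon\Z^n\to\Z^{n-1}$ forgets the $i$-th coordinate), one has $f(z)^n\leq\prod_{i=1}^n f_i(\pi_i(z))$, hence $f(z)^{\frac{n}{n-1}}\leq\prod_{i=1}^n g_i(\pi_i(z))$ with $g_i:=f_i^{1/(n-1)}$. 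Summing over $z$ and using $\sum_w g_i(w)^{n-1}=\sum_w f_i(w)=\|f_i\|_1$, inequality \eqref{eq-BL} follows (after raising to the power $\frac{n-1}{n}$) from the discrete Loomis--Whitney inequality
\[
\sum_{z\in\Z^n}\prod_{i=1}^n g_i(\pi_i(z))\leq\prod_{i=1}^n\Big(\sum_{w\in\Z^{n-1}}g_i(w)^{n-1}\Big)^{\frac{1}{n-1}},
\]
valid for arbitrary nonnegative $g_i$ on $\Z^{n-1}$.

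I would establish this Loomis--Whitney inequality by induction on $n$ via iterated H\"older, following the classical argument. Summing first over the last variable $z_n$ and applying the generalized H\"older inequality with all exponents equal to $n-1$ to the $n-1$ factors that depend on $z_n$ leaves an $(n-1)$-dimensional sum still carrying the factor $g_n$, which is independent of $z_n$. A further H\"older split with exponents $n-1$ and $\frac{n-1}{n-2}$ removes $g_n$ and produces a Loomis--Whitney sum in dimension $n-1$, to which the inductive hypothesis applies; the base case $n=2$ is Fubini.

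For the rigidity, the ``if'' direction is a direct computation: if $f=\lambda\chi_A$ with $A$ a cuboid, then $f_i=\lambda\chi_{\pi_i(A)}$ and $\|f\|_{n/(n-1)}=\lambda|A|^{(n-1)/n}=\big(\prod_i\|f_i\|_1\big)^{1/n}$. For the converse, equality in \eqref{eq-BL} forces equality both in the pointwise bound, for every $z$, and in the Loomis--Whitney step. Pointwise equality gives $f(z)=f_i(\pi_i(z))$ for all $i$ whenever $f(z)>0$ (every support point maximizes $f$ along all $n$ coordinate lines through it), and forces, for each $z$ off the support, at least one coordinate line through $z$ on which $f\equiv 0$. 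I would then retrace the two H\"older inequalities above, equivalently run the induction on $n$: the equal-exponent step forces the relevant projected functions to be proportional on each fiber, the second step forces the full-dimensional factor to match, and the inductive rigidity in dimension $n-1$ upgrades this to the conclusion that $f$ is constant on its support and that the support is a cuboid.

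The main obstacle is this rigidity analysis, and specifically the Loomis--Whitney equality case rather than the pointwise one. The pointwise condition alone does not suffice: for instance a constant multiple of the indicator of a diagonal set such as $\{(0,\dots,0),(1,\dots,1)\}$ satisfies $f(z)^n=\prod_i f_i(\pi_i(z))$ identically, yet is not a cuboid. One must therefore exploit the Loomis--Whitney equality, keeping track of the special max-projection structure of the $f_i$, and show simultaneously that $f$ takes a single positive value and that its support is a genuine product rather than a union of fibers or a staircase. This is the delicate point; it is essentially the sharp case of the geometric Loomis--Whitney inequality $|A|^{n-1}\leq\prod_{i=1}^n|\pi_i(A)|$, whose extremizers are exactly the cuboids.
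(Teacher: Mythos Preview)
Your inequality argument is correct and is, at bottom, the same iterated-H\"older induction the paper runs; you simply repackage it by first passing to the general functional Loomis--Whitney inequality for arbitrary nonnegative $g_i$ and then specializing to $g_i=f_i^{1/(n-1)}$, whereas the paper inducts directly on the statement for $f$ and its max-projections $f_i$ (slicing in $z_{n+1}$, applying the $n$-dimensional case to each slice, and gluing with two H\"older steps). For the inequality itself the two organizations are equivalent.

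There is, however, a concrete error in your rigidity discussion: the extremizers here are $f=\lambda\chi_A$ with $A=\prod_{i=1}^n\pi_i(A)$ an \emph{arbitrary product set}, not a cuboid $\prod_i[a_i,b_i]$. Your ``if'' computation, and your closing appeal to the geometric Loomis--Whitney equality case, both misstate this; the extremizers of $|A|^{n-1}\le\prod_i|\pi_i(A)|$ are exactly the product sets, with no interval structure on the factors. Cuboids only enter later, in Theorem~\ref{thm-G-N}, through the additional one-dimensional telescoping bound \eqref{eq-f}.

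Your decomposition also makes the ``only if'' direction heavier than necessary. For $n=2$ the Loomis--Whitney step is Fubini and is always an equality, so it contributes no rigidity; all the work is in the pointwise identity $f(z)^2=f_1(z_2)f_2(z_1)$, which is precisely what the paper analyzes. For $n\ge 3$ your route would require the full equality case of the \emph{functional} Loomis--Whitney inequality for arbitrary $g_i$ (tensor-product extremizers, extracted by tracking equality in both H\"older applications), and then a separate argument combining that with the max-projection structure of the $f_i$. The paper avoids this detour by carrying the rigidity through the induction itself: the inductive hypothesis applied to each slice $f(\cdot,z_{n+1})$ gives $f(\cdot,z_{n+1})=\lambda_{z_{n+1}}\chi_{A_{z_{n+1}}}$ with $A_{z_{n+1}}$ a product set, and equality in the pointwise bound $f(\cdot,z_{n+1})\le f_{n+1}$ forces all nontrivial slices to coincide with $f_{n+1}$, hence to agree with one another. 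That single observation replaces the whole functional Loomis--Whitney rigidity analysis.
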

As a direct consequence of the discrete Brascamp-Lieb inequality \eqref{eq-BL}, we have the following logarithmic-type Brascamp-Lieb inequality. 
\begin{cor}\label{cor-log-BL}
Let $n\geq 2$, $p>0$, and $f\in C_0(\Z^n)$ be nonnegative  with $\|f\|_p=1$. 
Then, 
\begin{equation}\label{eq-log-BL}
\left(\frac1n+\frac1p-1\right)\int_{\Z^n}f^p\log f^pdz_1dz_2\cdots dz_n\leq \sum_{i=1}^n\log\|f_i\|_{1}.
\end{equation}
Moreover, the equality of \eqref{eq-log-BL} holds if and only if $f=|A|^{-\frac1p} \cdot\chi_{A}$ with $A=\prod_{i=1}^n\pi_i(A)$. 
\end{cor}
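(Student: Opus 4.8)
The plan is to deduce Corollary~\ref{cor-log-BL} directly from the discrete Brascamp-Lieb inequality \eqref{eq-BL} by exploiting the scaling behaviour of the inequality under the substitution $f \mapsto f^s$. First I would apply \eqref{eq-BL} not to $f$ itself but to the function $g = f^p$ (or, more generally, test a power of $f$), since the hypothesis is normalized in $L^p$ rather than $L^{n/(n-1)}$. The key observation is that for any exponent $s>0$ one has $(f^s)_i = (f_i)^s$, because taking the $s$-th power commutes with the coordinate-wise maximum $\max_{z_i} f$ defining $f_i$. Thus applying \eqref{eq-BL} to $f^s$ yields
\begin{equation}
\|f^s\|_{\frac{n}{n-1}} \leq \left(\prod_{i=1}^n \|(f_i)^s\|_1\right)^{\frac1n} = \left(\prod_{i=1}^n \|f_i\|_s^s\right)^{\frac1n},
\end{equation}
i.e. $\|f\|_{\frac{ns}{n-1}}^{s} \leq \left(\prod_{i=1}^n \|f_i\|_s^s\right)^{\frac1n}$. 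This is a family of inequalities parametrized by $s$, and the logarithmic form should emerge by taking an appropriate derivative.

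The natural strategy is to rewrite both sides as functions of $s$, take logarithms, and differentiate at the value of $s$ for which the two normalizations match the statement. Concretely, I would set $s$ so that $\frac{ns}{n-1}$ relates to $p$, take $\log$ of both sides to obtain
\begin{equation}
s\log\|f\|_{\frac{ns}{n-1}} \leq \frac{s}{n}\sum_{i=1}^n \log\|f_i\|_s,
\end{equation}
and then view this as $\Phi(s) \leq \Psi(s)$ with equality-or-inequality structure suited to differentiation. The standard mechanism for producing a logarithmic Sobolev inequality from a scale of $L^q$ inequalities is the entropy identity $\frac{d}{dq}\big|_{q} \log\|g\|_q$, which brings down a factor $\int g^q \log g^q$. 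I would differentiate the inequality in $s$ at the critical point where the two sides agree for the extremizer, and use that the inequality holds with equality in a limiting/tangency sense so that the first-order comparison of derivatives yields \eqref{eq-log-BL}. The factor $\frac1n + \frac1p - 1$ on the left should appear precisely as the derivative of the exponent $\frac{ns}{n-1}$ combined with the $L^p$ normalization $\|f\|_p = 1$, which forces $\log\|f\|_p = 0$ and lets the entropy term survive cleanly.

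An alternative and perhaps cleaner route, which I would pursue if the differentiation bookkeeping becomes delicate, is to avoid calculus entirely and instead use the substitution $s = \frac{(n-1)p}{n}$ directly, so that $\frac{ns}{n-1} = p$, making the left-hand side $\|f^s\|_{n/(n-1)} = \|f\|_p^{s} = 1$ by the normalization hypothesis. Then the inequality becomes $0 \leq \frac1n \sum_i \log\|f_i\|_s$ after taking logs, which is only the $p = \frac{n-1}{n}$ endpoint; to reach general $p$ I would combine \eqref{eq-BL} applied to $f^{n/(n-1)}$ with Jensen's inequality relating $\|f_i\|_1$ to $\|f_i\|_{\text{other}}$, using the concavity of $\log$. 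The entropy term $\int f^p \log f^p$ arises from comparing $\|f\|_{q}$ at two nearby exponents via the elementary identity $\log\|g\|_q = \frac1q \log \int g^q$ and a first-order Taylor expansion, which is really the infinitesimal version of the power-substitution trick.

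The main obstacle I anticipate is the rigidity (equality) characterization rather than the inequality itself. For the inequality, the scaling/differentiation argument is robust once the exponents are matched. For equality, I would need to trace the rigidity of \eqref{eq-BL}—namely $f = \lambda \chi_A$ with $A = \prod_{i=1}^n \pi_i(A)$ a box—through the power substitution and through any application of Jensen or AM--GM. The care required is that each auxiliary inequality used (Jensen, concavity of $\log$, the BL rigidity) must simultaneously be forced to equality, and I would check that these conditions are mutually compatible and collectively equivalent to $f$ being a constant multiple of the indicator of a product set, with the constant pinned down to $|A|^{-1/p}$ by the normalization $\|f\|_p = 1$. Verifying that no \emph{additional} equality constraints are imposed by the differentiation step (which could in principle shrink the extremizer class) is the delicate point, and I expect it to reduce to the statement that the entropy term is consistent with $f$ taking a single nonzero value on $A$.
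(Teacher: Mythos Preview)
Your proposal has a genuine gap. The plan to differentiate the family of inequalities obtained by applying \eqref{eq-BL} to $f^s$ does not go through: for a generic $f$ there is \emph{no} value of $s$ at which the two sides of
\[
\log\|f\|_{\frac{ns}{n-1}}\;\leq\;\frac{1}{n}\sum_{i=1}^n\log\|f_i\|_s
\]
agree (equality in \eqref{eq-BL} applied to $f^s$ already forces $f$ itself to be a multiple of an indicator of a product set), so there is no tangency point at which a first-order comparison of derivatives is legitimate. Even if there were, differentiating the right-hand side in $s$ would produce entropies of the $f_i$ with respect to the wrong exponent, not $\log\|f_i\|_1$; the target inequality \eqref{eq-log-BL} has $\|f_i\|_1$ on the right and no $s$-dependence at all. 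Your ``alternative'' route with $s=(n-1)p/n$ has the same defect: it yields $0\leq \sum_i\log\|f_i\|_s$, which is neither the correct right-hand side nor contains the entropy term on the left.

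The paper's argument is much more direct and avoids the whole scaling family. One simply observes that, since $f^p\,dz$ is a probability measure, Jensen's inequality applied to $\log$ and the integrand $f^{\frac{n}{n-1}-p}$ gives
\[
\log\|f\|_{\frac{n}{n-1}}
=\frac{n-1}{n}\log\int_{\mathrm{supp}\,f} f^{\frac{n}{n-1}-p}\cdot f^p
\;\geq\;
\Bigl(\tfrac{1}{n}+\tfrac{1}{p}-1\Bigr)\int_{\Z^n} f^p\log f^p,
\]
and then one applies \eqref{eq-BL} itself (i.e.\ the case $s=1$ only) to bound $\log\|f\|_{n/(n-1)}$ from above by $\frac{1}{n}\sum_i\log\|f_i\|_1$. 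This is the entire proof of the inequality. Notice that this Jensen step is exactly the convexity of $t\mapsto\log\|f\|_{1/t}$ evaluated on the tangent line at $t=1/p$---so your instinct about an ``entropy identity from differentiating $\log\|g\|_q$'' is pointing at the right object, but it should be applied to $f$ alone to compare two $L^q$-norms of $f$, not to the Brascamp--Lieb inequality as a function of $s$. For rigidity, equality forces both the Jensen step and \eqref{eq-BL} to be equalities; the latter already gives $f=\lambda\chi_A$ with $A=\prod_i\pi_i(A)$, which automatically saturates Jensen, and $\|f\|_p=1$ pins down $\lambda=|A|^{-1/p}$.
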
 
The same as before, when letting $p=\frac{n}{n-1}$ in \eqref{eq-log-BL}, we recover \eqref{eq-BL}. Moreover, when letting $f=\chi_A$ with $A\subset \Z^n$ in \eqref{eq-BL}, we obtain the discrete part of the Loomis-Whitney inequality:
\begin{equation}\label{eq-LM}
|A|^{n-1}\leq \prod_{i=1}^n\pi_i(A)
\end{equation} 
for any finite subset $A$ in $\Z^n$ with that the equality holds if and only if $A=\prod_{i=1}^n\pi_i(A)$. Finally, we would like to mention that the discrete Brascamp-Lieb inequality has been extensively studied in \cite{CD}.

The rest of the paper is organized as follows. In Section 2, we give some preliminaries for analysis on $\Z^n$. In Section 3, we prove the main results of the paper. 
\section{Preliminaries}
In this section, we fix some notations for analysis on $\Z^n$. 

We consider $\Z^n$ as a graph with the set of vertices $\Z^n$ and with the set of edges
$$E(\Z^n)=\left\{\{z,z+e_i\}\ |\ z\in \Z^n,\ i=1,2,\cdots,n\right\}.$$
Here $e_i\in \Z^n$ is a vector with the $i^{\rm th}$ component being $1$ and the other components vanished for $i=1,2,\cdots,n$.

We denote $\pi_i:\Z^n\to \Z$ the natural projection sending $(z_1,z_2,\cdots,z_n)$ to $z_i$. The counting measure on $\Z$ is denoted as $dz$ and its product on $\Z^n$ is denoted as $dz_1dz_2\cdots dz_n$ which is also the counting measure on $\Z^n$. We denote the space of functions on $\Z^n$ as $C(\Z^n)$ and the space of functions on $\Z^n$ with finite support as $C_0(\Z^n)$. 

For any $p>0$ and $f\in C_0(\Z^n)$, define
$$\|f\|_p:=\left(\int_{\Z^n}|f|^pdz_1dz_2\cdots dz_n\right)^\frac{1}{p}.$$
For $i=1,2,\cdots,n$ and $f\in C(\Z^n)$, define
$$\p_if(z):=f(z+e_i)-f(z),\ \forall\ z\in \Z^n.$$

Moreover, on a graph $G=(V,E)$, a 1-form $\alpha$ is a bivariate asymmetric function on $V$ such that 
$$\alpha(x,y)=0$$
if $\{x,y\}\not\in E$. The space of 1-forms on $G$ is denoted as $A^1(G)$. The differential of a function $f\in \R^V$ is defined as 
$$df(x,y):=\left\{\begin{array}{ll}f(y)-f(x)&\{x,y\}\in E\\
0&\mbox{otherwise.}
\end{array}\right.$$
The space of 1-forms with finite support on $G$ is denoted as $A^1_0(G)$. For $p>0$ and $\alpha\in A_0^1(G)$, define 
$$\|\alpha\|_p:=\left(\sum_{\{x,y\}\in E}|\alpha(x,y)|^p\right)^\frac1p.$$ 
It is clear by definition that 
$$\|df\|_p=\left(\sum_{i=1}^n\|\p_if\|_p^p\right)^\frac{1}{p}$$
for all $p>0$ and $f\in C_0(\Z^n)$. For any subset $A$ of $V$, we define
$$\p A=\{\{x,y\}\in E\ |\ x\in A,\ y\in A^c\}.$$

Finally, for $a,b\in \Z$ with $a\leq b$, we denote 
$$[a,b]=\{z\in \Z\ |\ a\leq z\leq b\}.$$
A set  
$$A=\prod_{i=1}^n[a_i,b_i]\subset \Z^n$$
is called a cuboid in $\Z^n$. If moreover, $|b_i-a_i|$ is independent of $i$, we call $A$ a cube in $\Z^n$.
\section{Proofs of main results}
In this section, we prove the main results of the paper. We first prove Theorem \ref{thm-BL}, a discrete version of the Brascamp-Lieb inequality, by induction.

\begin{proof}[Proof of Theorem \ref{thm-BL}]
We proceed by induction. When $n=2$, by that 
$$f(z_1,z_2)\leq f_1(z_2)\mbox{ and }f(z_1,z_2)\leq f_2(z_1),$$
we have 
\begin{equation}\label{eq-square-f}
f^2(z_1,z_2)\leq f_1(z_2)f_2(z_1).
\end{equation}
Thus
$$\int_{\Z^2}f^2(z_1,z_2)dz_1dz_2\leq \int_{\Z^2} f_1(z_2)f_2(z_1)dz_1dz_2=\int_{\Z} f_1(z_2)dz_2\cdot\int_{\Z} f_2(z_1)dz_1.$$
This gives us \eqref{eq-BL} for $n=2$. If the equality of $\eqref{eq-BL}$ holds for $n=2$. Then, the equality of \eqref{eq-square-f} holds for all $(z_1,z_2)\in \Z^2$. So, when $f(z_1,z_2)>0$, we have 
\begin{equation}\label{eq-f-f1-f2}
f(z_1,z_2)=f_1(z_2)=f_2(z_1).
\end{equation}
When $f(z_1,z_2)=0$, we have 
\begin{equation}
f_1(z_2)=0\mbox{ or } f_2(z_1)=0
\end{equation}
which means that 
\begin{equation}\label{eq-f1-f2}
f_1(\zeta,z_2)=0\ (\forall\ \zeta\in \Z)\mbox{ or }f_2(z_1,\zeta)=0\ (\forall\ \zeta\in \Z).
\end{equation}
If $f\equiv 0$, there is nothing to be proved. When $f\not\equiv 0$, let $(z_1,z_2), (w_1,w_2)\in \supp f$, by  \eqref{eq-f1-f2}, we know that $f(z_1,w_2)>0$. Moreover by \eqref{eq-f-f1-f2}, we have 
$$f(z_1,z_2)=f(z_1,w_2)=f(w_1,w_2).$$
So, $f=\lambda\cdot\chi_A$ with $A=\supp f$ and
$$|A|=|\pi_1(A)|\cdot|\pi_2(A)|.$$
Because $A\subset \pi_1(A)\times\pi_2(A)$, we have $A=\pi_1(A)\times \pi_2(A)$. This proves the conclusion of the theorem for $n=2$.

Suppose that the conclusion of the theorem is true for some $n\geq 2$. Then, for any 
nonnegative $f\in C_0(\Z^{n+1})$, 
\begin{equation}\label{eq-int-f-1}
\begin{split}
&\left(\int_{\Z^n} f^{\frac{n}{n-1}}(z_1,\cdots,z_n,z_{n+1})dz_1\cdots dz_n\right)^{n-1}\\
\leq& \prod_{i=1}^{n}\int_{\Z^{n-1}} f_i(z_1,\cdots,\widehat z_i,\cdots,z_n,z_{n+1})dz_1\cdots\widehat{dz_{i}}\cdots dz_n.
\end{split}
\end{equation}
Moreover, note that 
\begin{equation}\label{eq-int-f-2}
\begin{split}
&\int_{\Z^n} f(z_1,\cdots,z_n,z_{n+1})dz_1dz_2\cdots dz_n\\
\leq& \int_{\Z^n}f_{n+1}(z_1,z_2,\cdots,z_n)dz_1dz_2\cdots dz_n=\|f_{n+1}\|_{1},
\end{split}
\end{equation}
and by H\"older's inequality, 
\begin{equation}\label{eq-int-f-3}
\begin{split}
&\int_{\Z^n} f^{\frac{n+1}{n}}(z_1,\cdots,z_n,z_{n+1})dz_1\cdots dz_n\\
\leq&\left( \int_{\Z^n} f^{\frac{n}{n-1}}f(z_1,\cdots,z_n,z_{n+1})dz_1\cdots dz_n\right)^{\frac{n-1}{n}}\left(\int_{\Z^n} f(z_1,\cdots,z_n,z_{n+1})dz_1\cdots dz_n\right)^\frac{1}{n}.
\end{split}
\end{equation}
By combining \eqref{eq-int-f-1}, \eqref{eq-int-f-2} and \eqref{eq-int-f-3}, we have
\begin{equation}\label{eq-int-f-4}
\begin{split}
&\int_{\Z^n} f^{\frac{n+1}{n}}(z_1,\cdots,z_n,z_{n+1})dz_1\cdots dz_n\\
\leq& \prod_{i=1}^{n}\left(\int_{\Z^{n-1}} f_i(z_1,\cdots\widehat{z_i},\cdots,z_n,z_{n+1})dz_1\cdots\widehat{dz_i}\cdots dz_n\right)^{\frac1n}\|f_{n+1}\|_1^\frac{1}{n}.
\end{split}
\end{equation}
Thus, by using H\"older's inequality again, 
\begin{equation}
\begin{split}
&\int_{\Z^{n+1}} f^{\frac{n+1}{n}}(z_1,\cdots,z_n,z_{n+1})dz_1\cdots dz_ndz_{n+1}\\
\leq& \int_{\Z}\prod_{i=1}^{n}\left(\int_{\Z^{n-1}} f_i(z_1,\cdots,\widehat z_i,\cdots,z_n,z_{n+1})dz_1\cdots \widehat{dz_i}\cdots dz_n\right)^{\frac1n}dz_{n+1}\cdot\|f_{n+1}\|_1^\frac{1}{n}\\
\leq&\prod_{i=1}^{n+1}\|f_i\|_1^\frac1n.
\end{split}
\end{equation}
This gives us the inequality \eqref{eq-BL} for $n+1$. When the equality holds, if $f\equiv 0$, then there is nothing to be proved. If $f\not\equiv 0$, then $\|f_{n+1}\|_1> 0$. By that the equality of \eqref{eq-int-f-4} holds, we know that the equality of \eqref{eq-int-f-1} and \eqref{eq-int-f-2} holds. By the induction hypothesis and that the equality of \eqref{eq-int-f-1} holds, for any $z_{n+1}\in \Z$, 
\begin{equation}\label{eq-f-Q-1}
f(\cdot,z_{n+1})=\lambda_{z_{n+1}}\cdot\chi_{A_{z_{n+1}}}(\cdot)
\end{equation}
for some $A_{z_{n+1}}$ in $\Z^n$ such that $A_{z_{n+1}}=\prod_{i=1}^n\pi_i(A_{z_{n+1}})$ and $\lambda_{z_{n+1}}>0$.  By that the equality of \eqref{eq-int-f-2} holds, for any $z_{n+1}\in \Z$, 
\begin{equation}\label{eq-f-Q-2}
f(\cdot,z_{n+1})=f_{n+1}(\cdot).
\end{equation}
This implies that for any $z_{n+1}\in \pi_{n+1}(\supp f)$, $\lambda_{z_{n+1}}$ and  $A_{z_{n+1}}$ are independent of $z_{n+1}$. Thus, 
\begin{equation}
f=\lambda\cdot \chi_{A}
\end{equation}
for some $\lambda>0$ and $A=\prod_{i=1}^{n+1}\pi_i(A)$. So, the conclusion of the theorem is also true for $n+1$. This completes the proof of the theorem.
\end{proof}
Next, we give the proof for Corollary \ref{cor-log-BL}, a logarithmic-type Brascamp-Lieb inequality.
\begin{proof}[Proof of Corollary \ref{cor-log-BL}]
By the Jensen inequality, 
\begin{equation}\label{eq-Jensen}
\begin{split}
\log\|f\|_{\frac{n-1}{n}}=&\frac{n-1}{n}\log\int_{\Z^n}f^{\frac{n}{n-1}}dz_1dz_2\cdots dz_n\\
=&\frac{n-1}{n}\log\int_{\supp f}f^{\frac{n-np+p}{n-1}}f^pdz_1dz_2\cdots dz_n\\
\geq&\frac{n-np+p}{n}\int_{\supp f}f^p\log fdz_1dz_2\cdots dz_n\\
=&\left(\frac{1}{n}+\frac1p-1\right)\int_{\Z^n}f^p\log f^p dz_1dz_2\cdots dz_n.
\end{split}
\end{equation} 
Combining this with \eqref{eq-BL}, we get \eqref{eq-log-BL}. When the equality of \eqref{eq-log-BL} holds, we know that the equality of \eqref{eq-BL} holds. So, by Theorem \ref{thm-BL}, we know that $f=\lambda \chi_A$ with $\lambda>0$ and $A=\prod_{i=1}^n\pi_i(A)$. By that $\|f\|_p=1$, we have $\lambda=|A|^{-\frac1p}$. This completes the proof of the corollary.
\end{proof}

Next, by using Theorem \ref{thm-BL}, we prove Theorem \ref{thm-G-N}.
\begin{proof}[Proof of Theorem \ref{thm-G-N}]
For any $z\in \Z^n$, by that $f\in C_0(\Z^n)$,
\begin{equation*}
\begin{split}
&2f(z)\\
=&\sum_{k=0}^\infty\left(f(z+ke_i)-f(z+(k+1)e_i)\right)+\sum_{k=0}^\infty\left(f(z-ke_i)-f(z-(k+1)e_i)\right)
\end{split}
\end{equation*}
for $i=1,2,\cdots,n.$ So,
\begin{equation}\label{eq-f}
|f(z)|\leq\frac{1}{2}\sum_{k=-\infty}^\infty |f(z+ke_i)-f(z+(k-1)e_i )|=\frac{1}{2}\int_{\Z}|\p_if|dz_i
\end{equation}
which implies that
\begin{equation*}
|f|_i\leq \frac{1}{2}\int_{\Z}|\p_if|dz_i
\end{equation*}
for any $i=1,2,\cdots,n$. Then, by Theorem \ref{thm-BL}, we have
\begin{equation*}
\|f\|_{\frac{n}{n-1}}\leq \prod_{i=1}^n\||f|_i\|_1^\frac1n\leq \frac{1}{2}\prod_{i=1}^n\|\p_i f\|_1^\frac1n.
\end{equation*}
This proves inequality \eqref{eq-G-N}. If the equality holds, we have
\begin{equation}\label{eq-|f|}
|f(z)|=\frac12\int |\p_if|dz_i
\end{equation}
and  by Theorem \ref{thm-BL},
\begin{equation}\label{eq-abs-f}
|f|=\lambda\cdot\chi_{A}
\end{equation}
with $A=\prod_{i=1}^n\pi_i(A)$ and $\lambda>0$.

By \eqref{eq-|f|}, when $f(z)>0$, one has
$$f(z)\geq f(z+e_i)\geq\cdots\geq f(z+ke_i)\geq f(z+(k+1)e_i)\geq\cdots\geq 0$$
and
$$f(z)\geq f(z-e_i)\geq\cdots\geq f(z-ke_i)\geq f(z-(k+1)e_i)\geq\cdots\geq 0$$
for any $i=1,2,\cdots,n$.
So, $f(z)$ is the maximum of $f$ on $\{z+ke_i\ | k\in \Z\}$ and $f(z+ke_i)\geq 0$. By replacing $z$ by $w=z+ke_i$ with $f(w)\neq 0$, we know that $f(w)$ also the the maximum of $f$ on $\{z+ke_i\ | k\in \Z\}$. Thus, when $f(z)>0$, the equality of  \eqref{eq-f} holds if and only if $f(z+ke_i)=f(z)$ for $a_i\leq k\leq b_i$ with some $a_i\leq b_i$ in $\Z$, and $f(z+ke_i)=0$ otherwise. Similarly, we have the same conclusion for $f(z)<0$. Combining this with \eqref{eq-abs-f}, we know that $A$ must be a cuboid in $\Z^n$. This completes the proof of the theorem.
\end{proof}
Finally, we come to prove Corollary \ref{cor-Sobolev}. The proofs for Corollary \ref{cor-log-Sobolev-1} and Corollary \ref{cor-log-Sobolev-2} are straight forward by combining \eqref{eq-Jensen} with Theorem  \ref{thm-G-N} and Corollary \ref{cor-Sobolev} respectively. So, we omit their proofs.  
\begin{proof}[Proof of Corollary \ref{cor-Sobolev}]
By \eqref{eq-G-N} and the AM-GM inequality, we have 
\begin{equation}
\|f\|_{\frac n{n-1}}\leq \frac{1}{2}\prod_{i=1}^n\|\p_if\|_1\leq \frac1{2n}\sum_{i=1}^n\|\p_if\|_1=\frac{1}{2n}\|df\|_1.
\end{equation}
When the equality holds, we know that $f=\lambda\cdot \chi_A$ with $A$ a cuboid in $\Z^n$, and $\|\p_1f\|_1=\|\p_2f\|_1=\cdots=\|\p_nf\|_1$. This implies that $A$ is cube.
\end{proof}

\end{document}